\newtheorem {thm}{Theorem}[section]
\newtheorem {prop}[thm]{Proposition}
\newtheorem {cor}[thm]{Corollary}
\newtheorem {defn}[thm]{Definition}
\def\N{{\Bbb N}}
\def\TT{{\Bbb T}}
\def\Z{{\Bbb Z}}
\def\P{{\Bbb P}}
\def\one{{\mathds 1}}
\def\Gxi{{\GG(\g[\xi])}}
\def\Gexxi{{{\rm ex}\GG(\g[\xi])}}
\newcommand{\F}{{\mathcal F}}
\newcommand{\C}{{\mathcal C}}
\def\0{{\bf 0}}
\def\ba{{\backslash}}
\def\a{\alpha}
\def\b{\beta}
\def\d{\delta}
\def\phi{\varphi}
\def\g{\gamma}
\def\l{\lambda}
\def\k{\kappa}
\def\s{\sigma}
\def\x{\xi}
\def\o{\omega}
\def\D{\Delta}
\def\L{\Lambda}
\def\O{\Omega}
\def\T{\T}
\def\C{{\cal C}}
\def\DD{{\cal D}}
\def\GG{{\cal G}}
\def\PP{{\cal P}}
\def\FF{{\cal F}}
\def\HH{{\cal H}}
\def\TT{{\cal T}}
\def\EE{{\cal E}}
\begin{document}

\title{Extremal decomposition for random Gibbs measures: From general metastates to metastates on extremal random Gibbs measures} 

\author{
Codina Cotar\footnote{
Department of Statistical Science, University College London, 
1-19 Torrington Place, London WC1E 6BT, UK
\texttt{c.cotar@ucl.ac.uk}}
,
Benedikt Jahnel\footnote{
Weierstrass Institute for Applied Analysis and Stochastics, Mohrenstrasse 39, 10117 Berlin, Germany
\texttt{Benedikt.Jahnel@wias-berlin.de}}
\, and
Christof K\"ulske\footnote{
Ruhr-University of Bochum, Fakult\"at f\"ur Mathematik, Postfach 102148, 44721, Bochum,
Germany
\texttt{Christof.Kuelske@ruhr-uni-bochum.de}}
}

\newcommand{\CC}[1]{{\color{blue} #1}}
\newcommand{\DE}[1]{{\color{red} #1}}
\newcommand{\CK}[1]{{\color{green} #1}}

\maketitle

\begin{abstract}
The concept of metastate measures on the states of a random spin system was introduced to be able to treat the large-volume asymptotics for complex quenched random systems, like spin glasses, which may exhibit chaotic volume dependence in the strong-coupling regime. We consider the general issue of the extremal decomposition for Gibbsian specifications which depend measurably on a parameter that may describe a whole random environment in the infinite volume. Given a random Gibbs measure, as a measurable map from the environment space, we prove measurability of its decomposition measure on pure states at fixed environment, with respect to the environment. As a general corollary we obtain that, for any metastate, there is an associated 
decomposition metastate, which is supported on the extremes for almost all environments, and which has the same barycenter. 
\end{abstract}

\smallskip
\noindent {\bf AMS 2010 subject classification:} 82B44, 60K35
\bigskip 

{\em Keywords: Gibbs measures, disordered systems, extremal decomposition, metastates} 

\section{Introduction} 
In this note, we consider quenched random spin models in the infinite volume on lattices or more generally countably-infinite graphs. This includes examples like the random-field Ising model and the Edwards-Anderson nearest-neighbor spin glass, but also more generally continuous spin models. We prove an extended version of the known extremal decomposition (see~\cite{Fo75} and~\cite[Theorem 7.26]{Ge11}) for the infinite-volume Gibbs measures in terms of pure states, i.e., the extremal Gibbs measures, which is {\em measurable} w.r.t.~the random environment, see Theorem~\ref{theorem1}.
We also present a connection between this result and the theory of metastates via the notion of the {\em decomposition metastate}, see Corollary~\ref{theorem3}. 

What are the difficulties? Indeed, measurability w.r.t.~random environments in the infinite volume, which can be taken for granted in simple situations, becomes nontrivial for general systems. The difficulty comes from the following physical phenomenon that may happen for general random systems and in particular those with frustrated interactions like spin glasses. 
A system is frustrated, if it has competing interactions, in the sense that it is impossible to 
minimize the total finite-volume Hamiltonian 
by minimizing all individual local interactions terms in the same configuration. 
Hence a groundstate has to be determined as a result of a non-trivial optimization which 
may depend on the precise shape and size of the volume. The same holds for low-temperature states.  
There may be {\em chaotic volume-dependence}, see~\cite{NeSt97,NeSt98,NeSt13}, of the finite-volume Gibbs measures along increasing volume sequences with fixed nonrandom boundary conditions, in the phase-transition regime. Hence, to construct a measurable random Gibbs measure, i.e., a {\em measurable map} $\xi\mapsto\mu[\xi]$ from the environment to the Gibbs measures via $\xi$-dependent volume 
sequences, is in general difficult.
Conversely, for fixed increasing volume sequence, pure states may only be found by $\xi$-dependent boundary conditions. This phenomenon of possible chaotic volume-dependence leads to the natural definition of a {\em metastate} $\k[\xi](d \mu)$ as a probability measure on the (random) Gibbs measures of the system, see~\cite{NeSt97,Bo06}. From a physical point of view, this is not just an unwanted complication, but it is an important object for the description of the large-volume asymptotics. Its intuitive meaning is that it describes the limiting empirical measure for the occurrence of Gibbs states for fixed environment $\xi$ in a sufficiently sparsely chosen increasing volume sequence, see equation~\eqref{Empi}. Hence, it carries the additional information about the weight, or relevance, of a particular Gibbs measure, compare~\cite{IaKu10}. To show existence of a metastate, e.g., in situations with a compact local spin space, is always possible, see~\cite{AiWe90,NeSt97,Bo06} for Ising systems, and as we sketch in Section~\ref{Metastates}. Note that metastates were used to show uniqueness results for short-range spin-glass models with independent couplings in two dimensions, see~\cite{ArDaNeSt10}, in higher dimensions, see~\cite{ArNeSt18}, and for other applications, 
see also~\cite{Bo06}. Having constructed the metastate, we note that there is always an associated measurable map from the environment to the Gibbs measures, which appears 
as its barycenter of mean value over all the Gibbs measures the metastate sees.

In the present note, we therefore start with the assumption of existence of such a measurable Gibbs measure and investigate its extremal decomposition. In general, there is no reason to assume that a metastate is supported only on extremal Gibbs measures. For a (mean-field) example where this occurs see, e.g., the Hopfield model in~\cite{Ku97}. Thus, we need to discuss the decomposition into pure states also at the metastate level.

We keep the setup as general as possible and assume only measurability of the Gibbsian specification for most of our results. We do not need to make the requirements of quasilocality or non-nullness, which are otherwise very important in the theory of infinite-volume Gibbs measures, see~\cite{Ge11}.

Our results may be useful for random systems, but  they may be applied also to other classes of parametrized specifications, be it by finite-dimensional or infinite-dimension parameters.

\section{The extreme decomposition}
\subsection{Setup, examples, and first theorem}\label{Sec_Setup}
Let $S$ be a countable set of sites and $(E,\EE)$ the local state space of each site. We assume that $(E,\EE)$ is a standard Borel space, see~\cite{Ge11} for details. We consider models on $\O=E^S$ equipped with the product sigma-algebra $\FF=\mathcal E^S$, given by \emph{specifications} $\g[\xi]$ which depend on a random \emph{parameter} $\xi\in H$, where $(H, \HH,\P)$ is a probability space.
A (non-random) specification $\g=(\g_\L)_{\L\Subset S}$ is a family of probability kernels from $\O$ to the set of probability measures $\PP(\O)=\PP(\O,\FF)$ indexed by finite sets of sites $\L\Subset S$.  We want to impose only minimal conditions on the specification. First, we require consistency, i.e., for all finite volumes $\D\subset\L$, events $A\in\FF$ and boundary conditions $\o\in\O$ we have that $\int\g_\L(d\s|\o)\g_\D(A|\s)=\g_\L(A|\o)$. Second, we require properness, i.e., for all $\L\Subset S$ we have that $\g_\L(A|\o)=\one_A(\o)$ if $A$ is measurable outside of $\L$. We do not require for example non-nullness (lower bounds on the specification) or quasilocality (continuity of the specification w.r.t.~the boundary condition), which play an important role for the existence theory and variational characterization of Gibbs measures. 
In what follows, we will assume that the random specification $\g[\cdot]$ is measurable in the sense that for all $\L\Subset S$ and $A\in \FF$, the mapping 
$$(\xi,\o)\mapsto \g_\L[\xi](A|\o)$$
is $(\HH\otimes\F)-\mathcal{B}[0,1]$ measurable. 

\medskip
We think of the parameter $\xi$ mostly, but not exclusively, as describing disorder.  
Often we imagine a Gibbsian specification given in terms of an absolutely 
summable potential $\Phi[\xi]=(\Phi_A[\xi])_{A\Subset S}$ where each of the 
potential functions $\o\mapsto\Phi_A[\xi](\o)$ depends on the spin variable $\o$ only inside $A$, and 
depends measurably on the parameter $\xi$ and the configuration $\o$. Then, we may build the associated quenched specification as the $\l$-modification, in which the product measure $\l^\L$ is modified by an exponential factor. 
Keeping the 
parameter $\xi$ fixed, this means that we define $\xi$-dependent 
specification kernels via
\begin{equation}
\g_{\L}^{\Phi}[\xi](d\o_\L|\eta)=\frac{\l^{\L}(d\o_{\L})\exp(-\sum_{A\cap\L\neq \emptyset}\Phi_A[\xi](\o_{\L}\eta_{\L^c}) )
}{
\int_{E^\L}\l^{\L}(d\tilde\o_{\L})\exp(-\sum_{A\cap\L\neq \emptyset}\Phi_A[\xi](\tilde\o_{\L}\eta_{\L^c}) )
},
\end{equation}
where $\lambda \in \PP(E)$ is some a priori measure on the single site space $E$ and $\l^\L$ denotes the $|\L|$-fold product distribution on $E^\L$. We write $\o_\L\eta_{\L^c}$ for the concatenation of the configuration $\o_\L$ in $\L$ and $\eta_{\L^c}$ in $\L^c=S\setminus\L$.
Two examples which fit this framework  are the \emph{random-field Ising model}, and the \emph{Edwards-Anderson nearest-neighbor spin glass}. We may consider these models on a general graph with countable vertex set $S$, with specification kernels defined as modifications w.r.t.~the uniform measure $\l$ on $E=\{-1,1\}$.

For the random-field Ising model, the potential functions are given by the pair interaction terms $\Phi_{A}[\xi](\o)=-\b\o_i\o_j$ for $A=\{i,j\}$ for nearest neighbors $i,j\in S$, and single-site terms $\Phi_{A}[\xi](\o)=\xi_i\o_i$ for $A=\{i\}$. For all other $A\Subset S$ we set $\Phi_{A}[\xi]=0$. Note that the disorder configuration $\xi=(\x_i)_{i\in S}$ enters only via the single-site terms, and are usually assumed to be given by i.i.d.~random variables w.r.t.~$\P$ on $(H,\HH)$.

For the Edwards-Anderson spin glass (in zero external field), only the nearest-neighbor pair-interaction term is non-vanishing with $\Phi_{A}[\xi](\o)=\b\x_{i,j}\o_i\o_j$ for $A=\{i,j\}$ and $i,j$ nearest neighbors, where $\xi$ is an $H$-valued configuration on the edges $\{i, j\}$ of the underlying graph. One usually assumes that $\x_{i,j}$ are mean-zero random variables, which are i.i.d.~over edges.

Both models are different in the following sense. For the random-field Ising model, monotonicity arguments (based on the FKG inequality) guarantee existence of a maximal and a minimal limiting measure, for any disorder configuration, which simplifies the analysis substantially. Nevertheless, to decide in which dimensions, and when they are equal, is far from trivial, see~\cite{BrKu88,AiWe90,Bo06,AiPe18}. On the other hand, for free or periodic boundary conditions, the random-field Ising model in three dimensions does show chaotic size-dependence.

For the Edwards-Anderson spin glass, the aforementioned chaotic volume-dependence is expected to occur for deterministic boundary conditions. On the other hand, if there is only one pair of Gibbs states, related by spin-flip symmetry (which should happen in two dimensions), any volume sequence with symmetry-preserving boundary conditions cannot cause chaotic volume-dependence. Examples of such boundary conditions are free and periodic boundary conditions. There may be chaotic volume-dependence also for these boundary conditions in higher dimensions, if there should be many of such pairs of Gibbs measures.
In that sense, both models display ordinary and size-dependent convergence, at least in some regimes and for complementary sets of boundary conditions. 
As another similarity, in the two examples above, disorder enters via local terms in a Hamiltonian which is used to define the Gibbsian specification, and the disorder takes values in a product space. This is nice, but our setup is more general and assumes only measurability of the specification depending on 
a parameter $\xi$.

A lack of monotonicity occurs also for gradient Gibbs measures. These appear for unbounded spin systems where one is forced to restrict to gradient observables (these are observables which 
depend only on field increments) 
in order to have a well-defined infinite-volume measure. Viewed on the resulting incremental 
field variables  there is no monotonicity, even if the initial model in the finite-volume Gibbsian 
formulation has monotonicity.  A concrete example where this is very explicit is the Gaussian free field. 
Gradient Gibbs measures are beyond the framework of the present note, 
but we refer to~\cite{FuSp97,BiKo07,EnKu08,CoKu12,CoKu15}.

In another (simpler) application, the variable $\xi$ may also encode a parameter such as a random inverse temperature, modifying a given (random or non-random)
potential 
in a multiplicative way.  
More generally, we may consider any parametrized probability distribution on Gibbsian potentials with the same state space. Also in such a case it is clear that at different realizations, the structure of Gibbs measures may be completely different for different parameter values. 

If there is a smooth dependence on this parameter, it may be expected that the extremal states should depend smoothly on the parameter, at least in some regions of the phase diagram. This is known to be true for example for ordered states of ferromagnetic low-temperature models, in the regime where Pirogov-Sinai is applicable. For more complicated disordered systems, the possibility for such a  "state-following" may no longer be true.  A phenomenon of temperature chaos as it is 
called by the physicists  may appear, see~\cite{EnRu07,ChHo10,WaMaKa15,CoRi15,BeSuZe18}. We show that, while such chaos may happen, at least measurability is not lost.

\medskip
For a given non-random specification $\g$, we call a measure $\mu\in \PP(\O)$ an \emph{infinite-volume Gibbs measure} associated to $\g$, if it satisfies the DLR equations, i.e., if for all $\L\Subset S$ and $A\in \FF$ we have that $\int\mu(d\o)\g_\L(A|\o)=\mu(A)$. 
We denote by $\GG(\g)$ the set of infinite-volume Gibbs measures associated to $\g$. In the random case, we will assume that $\Gxi\neq\emptyset$ for $\P$-almost all $\xi$.

In particular, for $\P$-almost all $\xi$, $\Gxi$ is a non-empty simplex of probability measures in $\PP(\O)=\PP(\O,\FF)$ (the set of probability measures on $(\O,\FF)$).  $\Gxi$ can be represented by its extreme elements, which we denote by $\Gexxi$. As a result of the non-random Gibbsian theory, applied at fixed $\xi$, any $\mu\in \Gxi$ is the barycenter of a unique probability measure $w[\xi]_\mu$ on $\Gexxi$, see~\cite[Chapter 7]{Ge11}. The interpretation here is that elements of the extreme boundary represent \emph{pure} macroscopic states of the system, where the macroscopic variables are not random. Therefore, the weight measure $w[\xi]_\mu$, which appears in this decomposition of $\mu$, then represents the uncertainty encountered by experimentalists about which pure states have been observed.

\medskip
We will be concerned with \emph{random infinite-volume Gibbs measures} given by the \emph{measurable} map $\xi\mapsto \mu[\xi]$. Measurability w.r.t.~the parameter $\xi$ is very important, here it is defined w.r.t.~the \emph{evaluation sigma algebra} ${\it e}(\PP(\O))$ given by the smallest sigma algebra such that for all $A\in\FF$, the mappings
$$\xi\mapsto \mu[\xi](A)$$
are $\HH - \mathcal B[0,1]$ measurable.
For fixed $\xi$, the weight distribution $w[\xi]_{\mu[\xi]}$, associated to $\mu[\xi]$, is an element of $\PP(\PP(\O))=\PP(\PP(\O), {\it e}(\PP(\O)))$, the space of probability measures on the space of random fields $\PP(\O)$ equipped with the evaluation sigma algebra on $\PP(\O)$. In order to consider measurability of $\xi\mapsto w[\xi]_{\mu[\xi]}$, we need to equip also $\PP(\PP(\O))$ with a sigma algebra, which is the evaluation sigma algebra ${\it e}(\PP(\PP(\O)))$ where $\HH - \mathcal B[0,1]$ measurability of 
$$\xi\mapsto w[\xi]_{\mu[\xi]}(M)$$
has to be checked for all $M\in  {\it e}(\PP(\O))$.

\medskip
Our first main result now states the existence and measurability of the function $\xi\mapsto w[\xi]_{\mu[\xi]}$.

\begin{thm}\label{theorem1}Assume $(E,\EE)$ to be standard Borel. Let $\g[\xi]$ be a random specification on $\O=E^S$ such that $\GG(\g[\xi])\neq \emptyset$ 
for $\P$-almost all $\xi$ and let $\mu[\xi]$ be a random infinite-volume Gibbs measure in $\Gxi$ which 
exists $\P$-almost surely. 
Then, also the mapping $\xi\mapsto w[\xi]_{\mu[\xi]}$ exists and is $\HH - {\it e}(\PP(\PP(\O))) $ measurable. Moreover, 
\begin{enumerate} 
 \item the weight function is almost surely supported on the quenched extremal Gibbs measures, that is 
$w[\xi]_{\mu[\xi]} \in \PP(\Gexxi, {\it e}( \Gexxi))$ for $\P$-almost every $\xi \in H$, and
\item the random Gibbs measure has the measurable extremal decomposition 
\begin{equation}\label{Decomposition}
\mu[\xi]=\int_{\Gexxi} w[\xi]_{\mu[\xi]}(d\nu)\nu .
\end{equation}
\end{enumerate}
The map $\xi\mapsto w[\xi]_{\mu[\xi]}$ is defined by a $\xi$-measurable kernel $\pi[\xi]$ 
via 
$w[\xi]_{\mu}(M)=\mu(\omega:\pi[\xi](\cdot|\o)\in M)$ for all $M\in {\it e}(\PP(\O))$ and $\mu\in\PP(\O)$. 
\end{thm}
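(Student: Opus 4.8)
The plan is to realize the decomposition kernel $\pi[\xi]$ of the theorem explicitly as a conditional probability given the tail $\sigma$-algebra, written as a $\limsup$ of the specification kernels along an exhausting volume sequence, and then to obtain $w[\xi]_{\mu[\xi]}$ as the pushforward of $\mu[\xi]$ under the map $\o\mapsto\pi[\xi](\cdot|\o)$. Using a $\limsup$ is precisely what makes joint measurability in $(\xi,\o)$ automatic, while the fixed-$\xi$ properties---that this object is genuinely a probability measure and an extreme Gibbs measure almost surely---are supplied by the non-random theory of~\cite[Chapter~7]{Ge11}.

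Concretely, fix an increasing sequence $\L_n\uparrow S$ of finite volumes and a countable algebra $\mathcal A\subset\F$ generating $\F$, and for $A\in\mathcal A$ set
\begin{equation*}
\pi[\xi](A|\o):=\limsup_{n\to\infty}\g_{\L_n}[\xi](A|\o).
\end{equation*}
Each $\g_{\L_n}[\xi](A|\o)$ is $(\HH\otimes\F)$-measurable by assumption, hence so is the $\limsup$, with no measurable selection needed. Now fix $\xi$ with $\Gxi\neq\emptyset$ and $\mu[\xi]\in\Gxi$, and write $\F_{\L_n^c}$ for the $\sigma$-algebra of events depending only on the spins outside $\L_n$, with tail $\TT=\bigcap_n\F_{\L_n^c}$. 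The DLR equations give $\g_{\L_n}[\xi](A|\cdot)=\mu[\xi](A\,|\,\F_{\L_n^c})$ $\mu[\xi]$-a.s., so the backward martingale convergence theorem along the decreasing filtration $(\F_{\L_n^c})_n$ turns the $\limsup$ into a genuine limit equal to $\mu[\xi](A\,|\,\TT)(\o)$, simultaneously for all $A\in\mathcal A$, on a set of full $\mu[\xi]$-measure. On that set $\pi[\xi](\cdot|\o)$ agrees on $\mathcal A$ with the regular conditional probability $\mu[\xi](\cdot\,|\,\TT)(\o)$ (which exists since $\O$ is standard Borel), a bona fide probability measure on $\F$ that, by the characterization of extreme Gibbs measures via triviality on $\TT$~\cite[Chapter~7]{Ge11}, lies in $\Gexxi$; off this set I set $\pi[\xi](\cdot|\o):=\mu[\xi]$, which is immaterial $\mu[\xi]$-a.s.\ and keeps the kernel jointly measurable.

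With the kernel in hand, define $w[\xi]_{\mu[\xi]}(M):=\mu[\xi](\{\o:\pi[\xi](\cdot|\o)\in M\})$ for $M\in{\it e}(\PP(\O))$. The null set on which $\pi[\xi](\cdot|\o)\notin\Gexxi$ carries no $\mu[\xi]$-mass, so $w[\xi]_{\mu[\xi]}$ is supported on $\Gexxi$, which is claim~(1); and the decomposition~\eqref{Decomposition}, tested on $A\in\mathcal A$, reads $\int\pi[\xi](A|\o)\,\mu[\xi](d\o)=\int\mu[\xi](A\,|\,\TT)\,d\mu[\xi]=\mu[\xi](A)$, which is claim~(2). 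For the measurability of $\xi\mapsto w[\xi]_{\mu[\xi]}$, recall that ${\it e}(\PP(\PP(\O)))$ is generated by the evaluations $w\mapsto w(M)$ with $M=\{\nu:\nu(A)\in B\}$, $A\in\mathcal A$, $B\in\mathcal B[0,1]$; it therefore suffices that
\begin{equation*}
\xi\mapsto\mu[\xi]\big(\{\o:\pi[\xi](A|\o)\in B\}\big)
\end{equation*}
be $\HH$-measurable. The integrand is $(\HH\otimes\F)$-measurable by the first step and $\xi\mapsto\mu[\xi]$ is measurable for the evaluation $\sigma$-algebra, so this follows from the standard fact that $\xi\mapsto\int f(\xi,\o)\,\mu[\xi](d\o)$ is measurable for bounded jointly measurable $f$ and a measurable family $(\mu[\xi])_\xi$, proved by a monotone-class argument starting from products $f(\xi,\o)=g(\xi)\one_{A'}(\o)$.

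I expect the main obstacle to be the middle step above: reconciling the purely measurable, fiberwise-defined $\limsup$ with the requirement that it land in $\Gexxi$ for $\P$-almost every $\xi$. The $\limsup$ is measurable at no cost, but one must argue that the almost-sure convergence, the identification with a regular conditional probability, and the extremality---each a fixed-$\xi$ statement from the non-random theory---assemble so that the exceptional $\o$-null sets, which themselves depend on $\xi$, never spoil $\HH$-measurability of the weight measure. Equivalently, one must verify that $\Gexxi$ carries a measurable structure depending measurably on $\xi$, so that ``supported on the extremes for $\P$-a.e.\ $\xi$'' is a well-posed measurable statement rather than merely a fiberwise one.
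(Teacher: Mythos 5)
Your construction is, in outline, the same as the paper's: a kernel built as the infinite-volume limit of the specification along a fixed cofinal sequence (this is the content of Proposition~\ref{prop1}), identified at fixed $\xi$ with the tail-conditional probabilities of $\mu[\xi]$ so that the non-random theory of~\cite[Chapter 7]{Ge11} supplies extremality and the barycenter identity, followed by the pushforward definition of $w[\xi]_{\mu[\xi]}$ and a Dynkin/Fubini argument for the $\HH$-measurability of $\xi\mapsto w[\xi]_{\mu[\xi]}(M)$; that last step is essentially verbatim the paper's.

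The genuine gap sits exactly where you predicted it. A $\limsup$ over a countable \emph{generating algebra} $\mathcal A$ does not produce a probability kernel: where the limits fail to exist the $\limsup$ need not even be additive on $\mathcal A$, and even where the limits do exist --- but you are off the ($\xi$-dependent) $\mu[\xi]$-full set on which backward-martingale convergence identifies them with $\mu[\xi](\,\cdot\,|\TT)$ --- the limit values on $\mathcal A$ can fail countable additivity, so that ``$\pi[\xi](\cdot|\o)$'' is simply undefined as an element of $\PP(\O)$ there. (Take $\O=\N$, $\mathcal A$ the finite--cofinite algebra, and the measures $\delta_n$: limits exist on all of $\mathcal A$ but no limiting probability measure exists; the same mass-escape occurs for specifications with noncompact $E$.) Your repair --- setting $\pi[\xi](\cdot|\o):=\mu[\xi]$ off ``this set'' --- is circular, because the set on which the $\limsup$ coincides with a regular conditional probability $\mu[\xi](\cdot|\TT)$ is not a priori jointly measurable in $(\xi,\o)$: the regular conditional probabilities are chosen separately for each $\xi$, and a jointly measurable version of such objects is precisely what the theorem is trying to produce. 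The paper closes this hole with Georgii's notion of a countable \emph{core}~\cite[Definition 4.A9, Theorem 4.A11]{Ge11}, available since $(E,\EE)$ is standard Borel: for a core, convergence on the countably many core events alone already determines a unique limiting probability measure (in the compact representation $\{0,1\}^{\N}$ the cylinders have this property), so the good set $\bar\O_o$ is defined with no reference to conditional probabilities and is manifestly in $\bigcap_{\L}(\HH\otimes\FF_{\L^c})$; the kernel is then everywhere defined using a fixed fallback $\nu_o$, and its measurability at arbitrary $A\in\FF$ follows from the Dynkin argument in the proof of Proposition~\ref{prop1}. Two smaller points: your fallback $\mu[\xi]$ ties the kernel to the particular Gibbs measure, whereas the theorem's final clause asserts $w[\xi]_\mu(M)=\mu(\o:\pi[\xi](\cdot|\o)\in M)$ for \emph{all} $\mu\in\PP(\O)$ with a single $\mu$-independent kernel; and to identify your pushforward with the unique decomposition weights $w[\xi]_{\mu[\xi]}$ you still need that $\pi[\xi]$ is a $(\GG(\g[\xi]),\TT)$-kernel together with~\cite[Proposition 7.22]{Ge11}, which the paper invokes explicitly and your sketch leaves implicit.
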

Considering the proof of Theorem~\ref{theorem1}, in particular the input given by Proposition~\ref{prop1}, we actually establish measurability, w.r.t.~$\xi$, of the affine mapping $w[\xi]: \mu\mapsto w[\xi]_\mu$ from $\PP(\O)$ to $\PP(\PP(\O), {\it e}( \PP(\O)))$ equipped with the evaluation sigma algebra $\s\big( {\it e}_{\mu,M}: (\mu, M)\mapsto w[\xi]_\mu(M), \mu\in\PP(\O), M\in  {\it e}( \PP(\O))\big)$. Additionally, under the assumptions of the theorem, for $\P$-almost all $\xi$, $\mu\mapsto w[\xi]_\mu$ is a bijection between $\Gxi\subset\PP(\O)$ and $\PP\big(\Gexxi, {\it e}(\Gexxi)\big)$.
%

\medskip
What have we gained? Ideally one would like to have a measurable extreme decomposition $\xi\mapsto \mu^{\alpha}[\xi]$ taking values in the extremals for given $\xi$, of the same type, indexed by some index $\a$. This can be shown to be true for some systems, e.g., for the random-field Ising model, where $\a\in\{+,-\}$. Indeed, by the FKG inequality, one may construct a maximal measure $\mu^+[\xi]$ with a fixed $+$-boundary condition, for any realization of $\xi$. However, in general this will not be possible, and so the best result we can hope to obtain is to go one level higher and construct a measurable map from $\xi$ to the {\em probability measures} on the \emph{extremals} for the same $\xi$. This is precisely the result of Theorem~\ref{theorem1}.

\subsection{Metastates}\label{Metastates}
Let us start this section by giving some background on metastates. In~\cite{AiWe90}, Aizenman and Wehr constructed a metastate as a $\xi$-dependent probability measure on the Gibbs states of a system that describes the large-volume asymptotics. The main underlying ideas for this are the following. Consider the probability distribution of the pair of the finite-volume Gibbs measure and the disorder variable 
\begin{align}\label{Pair}
(\g_{\L_n}[\xi]( \cdot |\o ),\xi)
\end{align} 
under the governing measure of the disorder variable, $\P(d\xi)$, for some fixed boundary condition $\o$. Let us note that it is useful to consider also more generally open or periodic boundary conditions, instead of fixed configurations $\o$. Suppose now that a limit exists for the random pair~\eqref{Pair} in the sense of weak convergence, along the cofinal sequence $(\L_n)_{n\in\N}$. Call the resulting limiting distribution $K(d \mu, d \xi)$. Of this limit, we may take a conditional distribution, obtained by conditioning on the disorder variable $\xi$, and this provides us with a measure on the first variable, which we call $\k^\text{AW}[\xi](d\mu)$, the {\em Aizenman-Wehr} or {\em conditional metastate}. 
The existence of the limit $K$ can be ensured for volume-subsequences $(\L_{k_n})_{n\in\N}$, for compact local state spaces $E$, and nice specifications, although we are not aware that this result appears in full generality. To carry this out in detail however is not a problem for example for the aforementioned random Ising models using the finite local state space and Markovianity of the interaction. The independence of the limit on the choice of the subsequence has not been proved in general, but it is very plausible in all examples that have been studied.

There is also a relation to the \emph{Newman-Stein metastate}, see~\cite{NeSt98}, 
which is constructed as a limit point of 
\begin{equation}\label{Empi}
\begin{split}
\k_{N}[\xi]=\frac{1}{N}\sum_{n=1}^ N\d_{\g_{\L_n}[\xi]( \cdot |\o )}
\end{split}
\end{equation}
for sufficiently sparse $\L_n$. 
This provides the interesting and intuitive reinterpretation of the Aizenman-Wehr-metastate as giving a distribution on the 
Gibbs states at fixed $\xi$ by ``drawing a large volume at random, independently 
of the choice of $\xi$".
Covering both approaches, let us define \emph{metastates} as follows. 
\begin{defn} \label{metastatedefinition} A measurable map $\k:H\rightarrow \PP(\PP(\O))$ is called a \emph{metastate} if 
for $\P$-almost all $\xi$, the Gibbs measures corresponding to $\xi$ obtain full mass, i.e., 
$$\k[\xi]\big(\GG(\g[\xi])\big)=1.$$
\end{defn}

\medskip
Coming back to our results, in Theorem~\ref{theorem1}, we assumed the existence of a measurable Gibbs state $\mu[\xi]$. 
To see how this assumption can be satisfied, note that for a metastate 
$\k[\xi](d\mu)$ one may take its expected value (or barycenter) 
 $\int \k[\xi](d\nu) \nu=\mu_{\k}[\xi]$ to obtain a measurable map into the (not necessarily 
 extreme) Gibbs measures for $\xi$. Further, in the sense of Definition~\ref{metastatedefinition}, the measurable weight function 
$\xi\mapsto w[\xi]_{\mu[\xi]}$ is a metastate. The 
statement of Theorem~\ref{theorem1} then has the following rephrasing: 
For any measurable Gibbs measure $\mu[\xi]$ there is a metastate $w[\xi]_{\mu[\xi]}$ 
supported on the quenched {\it pure} states $\Gexxi$ such that measurable Gibbs measure 
can be written as the barycenter of this metastate, in the form $\int_{\PP(\O)}  w[\xi]_{\mu[\xi]}(d\nu)\nu$.

In particular, applying Theorem~\ref{theorem1} and using the intermediate step to define $\mu[\xi]=\mu_{\k}[\xi]$, we obtain the 
following result.  
\begin{cor}\label{theorem3} Suppose $\k:H\rightarrow \PP(\PP(\O))$ is a metastate. 
Then there is an associated decomposition metastate $\k^{\rm ex}:H\rightarrow \PP(\PP(\O))$ which has the following two properties. 
\begin{enumerate}
\item $\k^{\rm ex}$ has the same barycenter as $\k$, i.e., for  $\P$-almost all $\xi$,
$$
\int_{\PP(\O)}\k^{\rm ex}[\xi](d\nu) \nu =\int_{\PP(\O)} \k[\xi](d\nu) \nu \qquad\text{ and}
$$ 
\item
for  $\P$-almost all $\xi$, $\k^{\rm ex}$ is supported on the corresponding extremal Gibbs measures, 
i.e., $$\k^{\rm ex}[\xi]\big(\Gexxi\big)=1.$$
\end{enumerate}
\end{cor}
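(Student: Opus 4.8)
\textbf{Proof plan for Corollary~\ref{theorem3}.}
The plan is to reduce the corollary to a direct application of Theorem~\ref{theorem1} by using the barycenter of $\k$ as the measurable Gibbs measure required as input. First I would define $\mu[\xi]=\mu_{\k}[\xi]:=\int_{\PP(\O)}\k[\xi](d\nu)\nu$, the barycenter of the given metastate. The key preliminary step is to verify that this map is well-defined and measurable in the sense required by Theorem~\ref{theorem1}, namely $\HH-{\it e}(\PP(\O))$ measurable. For a fixed event $A\in\FF$, the map $\xi\mapsto\mu_{\k}[\xi](A)=\int_{\PP(\O)}\k[\xi](d\nu)\nu(A)$ is an integral against the measurable kernel $\k[\xi]$ of the evaluation functional $\nu\mapsto\nu(A)$, which is measurable by definition of the evaluation sigma algebra; hence $\xi\mapsto\mu_{\k}[\xi](A)$ is measurable, and since $A$ is arbitrary, $\mu_{\k}$ is a measurable random field.

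Next I would check that $\mu_{\k}[\xi]\in\GG(\g[\xi])$ for $\P$-almost all $\xi$. Since $\k$ is a metastate, $\k[\xi](\GG(\g[\xi]))=1$ almost surely, so the barycenter is a mixture of Gibbs measures for $\g[\xi]$. Because $\GG(\g[\xi])$ is a simplex (in particular convex and closed under the relevant integrals, as noted in the setup following the DLR equations), the barycenter again satisfies the DLR equations: integrating the identity $\int\nu(d\o)\g_\L[\xi](A|\o)=\nu(A)$, valid for $\k[\xi]$-almost every $\nu$, against $\k[\xi](d\nu)$ and interchanging the order of integration yields $\int\mu_{\k}[\xi](d\o)\g_\L[\xi](A|\o)=\mu_{\k}[\xi](A)$. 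Thus the hypotheses of Theorem~\ref{theorem1} are met with $\mu[\xi]=\mu_{\k}[\xi]$.

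Applying Theorem~\ref{theorem1} now produces the measurable weight map $\xi\mapsto w[\xi]_{\mu_{\k}[\xi]}$, which I take as the definition of $\k^{\rm ex}[\xi]:=w[\xi]_{\mu_{\k}[\xi]}$. Property (2) is then immediate from the first conclusion of Theorem~\ref{theorem1}, which guarantees $w[\xi]_{\mu_{\k}[\xi]}\in\PP(\Gexxi,{\it e}(\Gexxi))$, i.e., $\k^{\rm ex}[\xi](\Gexxi)=1$ almost surely. Property (1) follows from the extremal decomposition~\eqref{Decomposition}: the barycenter of $\k^{\rm ex}[\xi]$ is
\begin{equation*}
\int_{\PP(\O)}\k^{\rm ex}[\xi](d\nu)\nu=\int_{\Gexxi}w[\xi]_{\mu_{\k}[\xi]}(d\nu)\nu=\mu_{\k}[\xi]=\int_{\PP(\O)}\k[\xi](d\nu)\nu,
\end{equation*}
where the middle equality is exactly~\eqref{Decomposition} and the last is the definition of $\mu_{\k}$. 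Finally I should note that $\k^{\rm ex}$ is itself a metastate in the sense of Definition~\ref{metastatedefinition}, since it is supported on extremal Gibbs measures, which are in particular Gibbs measures, so $\k^{\rm ex}[\xi](\GG(\g[\xi]))=1$.

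The bulk of the argument is routine given Theorem~\ref{theorem1}; the only step requiring genuine care is the measurability and Gibbsianness of the barycenter $\mu_{\k}$, which I expect to be the main (though mild) obstacle, since it hinges on a Fubini-type interchange justified by the measurability built into the evaluation sigma algebras on $\PP(\O)$ and $\PP(\PP(\O))$.
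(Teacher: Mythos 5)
Your proposal is correct and follows exactly the paper's route: the paper proves the corollary precisely by taking $\mu[\xi]=\mu_{\k}[\xi]=\int\k[\xi](d\nu)\,\nu$ as the measurable Gibbs measure and applying Theorem~\ref{theorem1} to obtain $\k^{\rm ex}[\xi]=w[\xi]_{\mu_{\k}[\xi]}$. You additionally spell out the measurability and DLR-consistency of the barycenter (via the evaluation sigma algebras and a Fubini interchange), which the paper states without detail in the discussion preceding the corollary; these verifications are correct as written.
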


\subsection{Symmetries}
So far we have not used any specific structure for the space $(H,\HH)$. For the next result we want to introduce translations on $H$ as well as on $\O$. For this, let us assume that $S=\Z^d$ and equip $H$ with a \emph{shift group} $\theta=(\theta_i)_{i\in S}$, i.e., a family of measurable bijections $\theta_i: H\to H, \, \xi\mapsto \theta_i\xi$. More specifically, for $\o\in\O$, we define $(\theta_j\o)_i=\o_{i-j}$. For example in the random-field Ising model, where $H=\{-1,1\}^S$, the shift group is defined exactly as for $\O$. We call a random specification $\g$ \emph{translation covariant}, if for all $j\in S, \o\in\O$, $A\in\FF$ and $\L\Subset S$ we have that 
\begin{equation*}
\g_{\theta_j\L}[\theta_j\xi](\theta_jA|\theta_j\o)=\g_\L[\xi](A|\o)\qquad\text{for }\P\text{-almost all }\xi
\end{equation*}
where $\theta_j A=\{\theta_j\o: \o\in A\}$ and $\theta_j\L=\{i+j\in S: i\in\L\}$. Similarly we call a random Gibbs measure $\xi\mapsto\mu[\xi]$ \emph{translation covariant}, if for all $A\in\FF$ and $j\in S$ we have that 
\begin{equation*}
\mu[\theta_j\xi](A)=\mu[\xi](\theta_{-j}A)\qquad\text{for }\P\text{-almost all }\xi.
\end{equation*}
The following theorem expresses that the extremal decomposition of Theorem~\ref{theorem1} for translation invariant models is also translation covariant. 
\begin{thm} \label{theorem2} Assume that $\g[\xi]$ and $\mu[\xi]$ satisfy the conditions of Theorem~\ref{theorem1} and are both translation covariant.
Then, 
the measurable map $\xi \mapsto w[\xi]_{\mu[\xi]}$ from Theorem~\ref{theorem1}
is also translation invariant, i.e., for all $j\in S$ and $M\in \it{e}(\PP(\O))$ we have that
\begin{equation}\label{cov1}
w[\theta_j\xi]_{\mu[\theta_j\xi]}(\theta_jM)= w[\xi]_{\mu[\xi]}(M)\qquad\text{for }\P\text{-almost all }\xi,
\end{equation}
where $\theta_jM=\{\theta_j\nu\in\PP(\O): \nu\in M \}$ with $\theta_j\nu(A)=\nu(\theta_{-j}A)$ for all $A\in \FF$.
\end{thm}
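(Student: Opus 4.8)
The plan is to prove Theorem~\ref{theorem2} by transporting the uniqueness statement from Theorem~\ref{theorem1} through the shift action. The key structural fact is that the extremal decomposition $w[\xi]_{\mu[\xi]}$ is, by Theorem~\ref{theorem1}, the \emph{unique} probability measure on $\Gexxi$ whose barycenter is $\mu[\xi]$. Translation covariance of $\g$ should induce a natural bijection $\theta_j:\Gexxi\to\Gexxi[\theta_j\xi]$ on the extremal Gibbs measures, and translation covariance of $\mu$ should make the pushed-forward weight measure a legitimate competitor for the decomposition of $\mu[\theta_j\xi]$. Uniqueness then forces the two to coincide, which is exactly~\eqref{cov1}.

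First I would record that translation covariance of the specification implies $\nu\in\Gxi$ if and only if $\theta_j\nu\in\GG(\g[\theta_j\xi])$, and that this correspondence maps extreme points to extreme points, so $\theta_j(\Gexxi)=\Gexxi[\theta_j\xi]$. This uses that $\theta_j$ is an affine bijection of $\PP(\O)$ (with inverse $\theta_{-j}$) preserving the simplex structure, together with the DLR equations being covariant under $\theta_j$ via the covariance relation for $\g$. Here one must be a little careful that the covariance of $\g$ holds only for $\P$-almost all $\xi$ and for each fixed $j$; since $S=\Z^d$ is countable, intersecting over the countably many $j\in S$ keeps a full-measure set of $\xi$ on which everything holds simultaneously.

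Next I would define the candidate measure $\tilde w[\xi]:=(\theta_j)_* w[\xi]_{\mu[\xi]}$ on $\Gexxi[\theta_j\xi]$, i.e. $\tilde w[\xi](M')=w[\xi]_{\mu[\xi]}(\theta_{-j}M')$, and compute its barycenter. Using the affinity of $\theta_j$ and~\eqref{Decomposition} one gets $\int\tilde w[\xi](d\nu')\nu'=\theta_j\big(\int w[\xi]_{\mu[\xi]}(d\nu)\nu\big)=\theta_j\mu[\xi]$, and by translation covariance of the random Gibbs measure $\theta_j\mu[\xi]=\mu[\theta_j\xi]$ (reading the covariance relation $\mu[\theta_j\xi](A)=\mu[\xi](\theta_{-j}A)$ as $\mu[\theta_j\xi]=\theta_j\mu[\xi]$). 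Thus $\tilde w[\xi]$ is a probability measure supported on $\Gexxi[\theta_j\xi]$ with barycenter $\mu[\theta_j\xi]$. By the uniqueness of the extremal decomposition at the environment $\theta_j\xi$ (the final clause of Theorem~\ref{theorem1}), $\tilde w[\xi]=w[\theta_j\xi]_{\mu[\theta_j\xi]}$. Unwinding the definition of $\tilde w$ gives $w[\theta_j\xi]_{\mu[\theta_j\xi]}(\theta_j M)=w[\xi]_{\mu[\xi]}(M)$, which is~\eqref{cov1}.

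The main obstacle I anticipate is measurability and measure-theoretic bookkeeping rather than the algebraic identity. Two points need care: that $\theta_j$ induces a measurable map on $(\PP(\O),{\it e}(\PP(\O)))$ so that the pushforward $\tilde w[\xi]$ is well defined on the evaluation sigma algebra and $\theta_j M\in{\it e}(\PP(\O))$ whenever $M$ does (this follows because $\theta_j$ acts as $A\mapsto\theta_{-j}A$ on $\FF$ and the evaluation sigma algebra is generated by the maps $\nu\mapsto\nu(A)$); and that the almost-sure exceptional sets from the three covariance hypotheses assemble into a single $\P$-full-measure set stable under the countable shift action. Once these are settled, the argument is a clean uniqueness transport and the core computation of the barycenter is short.
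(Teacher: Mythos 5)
Your argument is correct in its essentials but follows a genuinely different route from the paper. The paper works at the level of the kernel $\pi$ from Proposition~\ref{prop1}: it first rebuilds that kernel so as to be shift-equivariant by construction --- choosing the countable core $\C$ translation invariant (possible by the standard-Borel isomorphism, \cite[Corollary 4.A.13]{Ge11}), replacing $\bar\O_o$ by the set on which $\lim_{n}\g_{\theta_i\L_n}[\xi](A|\o)$ exists for all $i\in\Z^d$ with an $i$-independent limit, and fixing $\nu_o$ translation invariant --- and then proves the pointwise covariance $\pi[\theta_j\xi](\theta_jA|\theta_j\o)=\pi[\xi](A|\o)$, from which \eqref{cov1} follows by the same change of variables you perform. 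You never touch the kernel: you push $w[\xi]_{\mu[\xi]}$ forward under $\theta_j$, verify it is supported on ${\rm ex}\,\GG(\g[\theta_j\xi])$ with barycenter $\mu[\theta_j\xi]$, and invoke uniqueness of the extremal (Choquet) decomposition at the environment $\theta_j\xi$. Your route is shorter and more conceptual, and it transfers verbatim to other group actions (spin flips, rotations), which the paper only mentions as a remark after the theorem; the paper's route is constructive and yields the stronger kernel-level covariance as a byproduct, which is a statement about a single exceptional set that is stable under the shift.

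One step you should make explicit, because it is the precise point the paper's construction is designed to avoid: your uniqueness argument identifies $w[\theta_j\xi]_{\mu[\theta_j\xi]}$ --- the object produced by Theorem~\ref{theorem1}, i.e.\ defined through the kernel $\pi[\theta_j\xi]$ --- with \emph{the} unique representing measure at the environment $\theta_j\xi$. Proposition~\ref{prop1} guarantees that $\pi[\xi']$ is a $(\GG(\g[\xi']),\TT)$-kernel only for $\xi'$ in some $\P$-full set $H_0$, so you need $\P(\theta_j^{-1}H_0)=1$, i.e.\ quasi-invariance of $\P$ under the shifts. This is automatic for stationary (e.g.\ i.i.d.) disorder and arguably implicit in the covariance hypotheses, but it is not among the paper's stated assumptions; the paper's pointwise kernel identity, valid on an explicitly shift-stable set of $(\xi,\o)$, sidesteps exactly this. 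Your bookkeeping over the countably many $j\in S$ is right; note also that, exactly as the paper does tacitly, one must read the covariance hypotheses as holding for all $\o$, $A$, $\L$ simultaneously off a single $\xi$-null set (or run a Fubini argument over $\o$), since in your support step the exceptional set must not depend on the uncountably many extremal measures $\nu$, and in the change-of-variables step it must not depend on $\o$.
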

Under suitable conditions, Theorem~\ref{theorem2} can be extended also to cover other group actions $\tau$ instead of $\theta$, such as spin flips or spin rotations, generalizing~\cite[Corollary 7.28]{Ge11} to the $\xi$-dependent setting.

\section{Proofs}
The proof of Theorem~\ref{theorem1} is based on the existence of the $(\GG(\g[\xi]),\TT)$-kernel
 $\pi[\xi](\cdot |\o)$ with the desired measurability and support properties. Once this is established, the proof of the main theorem follows by an application of~\cite[Proposition 7.22]{Ge11} and measure-theoretic arguments. 

\medskip
Let us introduce some more notation. For $\L\Subset S$ we denote by $\FF_\L$, the sub-sigma algebra of sets in $\FF$ which are measurable in $\L$. Further, the \emph{tail sigma algebra} $\TT$ is defined via $\TT=\bigcap_{\L\Subset S}\FF_{\L^c}$ where $\L^c=\O\setminus\L$. As a particular case of ~\cite[Definition 7.21]{Ge11}, a probability kernel $\pi$ is called a \emph{$(\GG(\g),\TT)$-kernel}, for $\g$ a specification, if the following holds. For all measures $\mu\in\GG(\g)$,
\begin{enumerate}
\item $\pi$ is a version of the conditional probability given $\TT$, that is, for all $A\in\FF$, $\mu(A |\TT)=\pi(A|\cdot)$ $\mu$-almost surely, and
\item for $\mu$-almost every $\o$, the measure $\pi(\cdot | \o)$ is an element in $\GG(\g)$ and $\{\o: \pi(\cdot|\o)\in\GG(\g)\}\in \TT$.
\end{enumerate}
Let us stress that $\pi$, if it exists, does not depend on individual measures $\mu$, but rather on $\g$. 

\medskip
The following result establishes existence of a random version of a $(\GG(\g),\TT)$-kernel and therefore extends~\cite[Proposition 7.25]{Ge11} to the random setting.
\begin{prop}\label{prop1}Assume $(E,\EE)$ to be standard Borel. Let $\g[\x]$ be a random specification on $\O=E^S$ such that $\Gxi\neq\emptyset$ for $\P$-almost all $\xi$. Then there exists a map $\pi:H\times \O \times \FF \mapsto [0,1]$,  
 $(\xi,\o,A)\mapsto \pi[\xi](A|\o)$, such that 
\begin{enumerate}  
\item at any fixed $A\in \FF$, we have $\bigcap_{\L}(\HH\otimes \FF_{\L^c}) - \mathcal B[0,1]$ measurability of 
$$(\xi,\o)\mapsto\pi[\xi](A|\o),\qquad\text{ and}$$
\item for $\P$-almost all $\xi$, $\pi[\xi]$ is a $(\GG(\g[\xi]), \TT)$-kernel.
\end{enumerate} 
\end{prop}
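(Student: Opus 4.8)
The plan is to adapt the deterministic construction of a $(\GG(\g),\TT)$-kernel from \cite[Proposition 7.25]{Ge11} so that every choice entering it is made \emph{independently} of $\x$, and then to carry $\x$ along and verify the asserted measurability by a monotone-class argument. First I would fix, once and for all, a cofinal increasing sequence of finite volumes $\L_1\subset\L_2\subset\cdots$ with $\L_n\Subset S$ and $\bigcup_n\L_n=S$ (possible since $S$ is countable), together with a countable algebra $\AA_0$ generating $\FF$ (possible since $(E,\EE)$, hence $\O=E^S$, is standard Borel and $\FF$ is countably generated). The central object is the limit
$$g_A[\x](\o):=\lim_{n\to\infty}\g_{\L_n}[\x](A|\o),\qquad A\in\AA_0,$$
taken on the set where it exists and assigned a fixed default elsewhere. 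Two preliminary observations drive the construction: (i) by properness each map $\o\mapsto\g_{\L_n}[\x](A|\o)$ factors through $\o_{\L_n^c}$, and since the projection $\o\mapsto\o_{\L_n^c}$ admits a measurable section, the assumed $(\HH\otimes\FF)$-measurability upgrades to joint $(\HH\otimes\FF_{\L_n^c})$-measurability of $(\x,\o)\mapsto\g_{\L_n}[\x](A|\o)$; (ii) by the DLR equations, for every $\mu\in\Gxi$ the kernel $\g_{\L_n}[\x](A|\cdot)$ is a version of $\mu(A|\FF_{\L_n^c})$, so along $\FF_{\L_n^c}\downarrow\TT$ the reverse martingale convergence theorem yields convergence $\mu$-almost surely and in $L^1(\mu)$ to $\mu(A|\TT)$.

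Next I would define the good set $\O^*_\x\in\TT$ as the set of $\o$ for which $g_A[\x](\o)$ exists for every $A\in\AA_0$ and the resulting content $A\mapsto g_A[\x](\o)$ is countably additive on $\AA_0$; on $\O^*_\x$ it then extends uniquely to a probability measure $\pi[\x](\cdot|\o)$ on $\FF=\s(\AA_0)$, while off $\O^*_\x$ I set $\pi[\x](\cdot|\o)$ equal to a fixed reference measure $\r\in\PP(\O)$. That $\O^*_\x$ is a tail event is immediate from the tail-measurability of the $g_A[\x]$, and observation (ii) gives $\mu(\O^*_\x)=1$ for every $\mu\in\Gxi$. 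For fixed $\x$ with $\Gxi\neq\emptyset$, the remaining assertions of part 2 --- that $\pi[\x](A|\cdot)=\mu(A|\TT)$ $\mu$-almost surely for all $\mu\in\Gxi$, that $\pi[\x](\cdot|\o)\in\Gxi$ for $\o\in\O^*_\x$, and that $\{\o:\pi[\x](\cdot|\o)\in\Gxi\}\in\TT$ --- are established exactly as in the deterministic case \cite[Proposition 7.25]{Ge11} applied to the single specification $\g[\x]$, since consistency of $\g[\x]$ passes to the limit $\pi[\x]$ on the tail set where the limits define a measure. The fixed default causes no trouble here, as $\{\o:\r\in\Gxi\}$ is either $\O$ or $\emptyset$, so $\{\o:\pi[\x](\cdot|\o)\in\Gxi\}$ remains tail. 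This yields part 2 for $\P$-almost all $\x$.

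It then remains to establish part 1. For $A\in\AA_0$ this follows from observation (i): for each $m$ one has $g_A[\x](\o)=\limsup_{n\ge m}\g_{\L_n}[\x](A|\o)$, which is $(\HH\otimes\FF_{\L_m^c})$-measurable because $\FF_{\L_n^c}\subseteq\FF_{\L_m^c}$ for $n\ge m$; intersecting over $m$ gives $\bigcap_{\L}(\HH\otimes\FF_{\L^c})$-measurability. Since $\O^*_\x$ and the content conditions are described by countably many relations among the $g_A[\x]$, the set $\{(\x,\o):\o\in\O^*_\x\}$ also lies in $\bigcap_{\L}(\HH\otimes\FF_{\L^c})$, so measurability survives after inserting the constant default. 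Finally I would extend from $\AA_0$ to all of $\FF$ by a Dynkin argument: the collection of $A\in\FF$ for which $(\x,\o)\mapsto\pi[\x](A|\o)$ is $\bigcap_{\L}(\HH\otimes\FF_{\L^c})$-measurable is a $\lambda$-system --- using that $\pi[\x](\cdot|\o)$ is a genuine probability measure for every $(\x,\o)$ --- containing the generating $\pi$-system $\AA_0$, hence all of $\FF$. The main obstacle is precisely this interplay between the measure-extension step and measurability in $\x$: one must ensure that the passage from a finitely additive content on $\AA_0$ to a $\s$-additive measure on $\FF$ is carried out by a single, $\x$-independent recipe, so that joint measurability is not destroyed. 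This is exactly where the standard Borel hypothesis on $(E,\EE)$ is essential, since it supplies both the countable generating algebra and the regularity needed for the parametrized extension to stay measurable.
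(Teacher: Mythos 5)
Your construction follows the paper's route almost everywhere---limits of $\g_{\L_n}[\x](A|\cdot)$ along a countable class of events, a jointly measurable good set, a fixed default measure off it, part 2 at fixed $\x$ by quoting the deterministic argument of~\cite[Proposition 7.25]{Ge11}, and a concluding Dynkin-system argument for part 1---but it has a genuine gap exactly at the point you yourself flag as ``the main obstacle.'' You define the good set $\O^*_\x$ by requiring, besides existence of the limits $g_A[\x](\o)$ for all $A$ in a countable generating algebra $\AA_0$, that the content $A\mapsto g_A[\x](\o)$ be countably additive on $\AA_0$, and you then assert that this set is ``described by countably many relations among the $g_A[\x]$.'' For a generic countable generating algebra this is false: countable additivity quantifies over \emph{all} disjoint sequences (equivalently, all decreasing null sequences) in $\AA_0$, and the family of such sequences is a subset of $\AA_0^{\N}$, hence uncountable. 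Each individual sequence contributes a measurable condition, but the full conjunction is an uncountable intersection of measurable sets; a priori the resulting set is only coanalytic, and its membership in $\bigcap_{\L}(\HH\otimes \FF_{\L^c})$ does not follow. Your closing appeal to ``the regularity needed for the parametrized extension to stay measurable'' names this difficulty without resolving it, so as written part 1 is not established on the set where $\pi[\x](\cdot|\o)$ is claimed to be a measure.

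The paper closes precisely this hole with the notion of a countable \emph{core} $\C$ in the sense of~\cite[Definition 4.A9, Theorem 4.A11]{Ge11}: a finite-intersection-stable countable generator of $\FF$ with the property that convergence of $\mu_n(A)$ for all $A\in\C$, for probability measures $\mu_n$, by itself already determines a limiting probability measure. Concretely, after a Borel isomorphism of $E$ with a finite set, $\N$, or $\{0,1\}^{\N}$, one takes $\C$ to be the finite cylinder events, and a compact-class argument makes any limit of probability measures along $\C$ automatically countably additive---no additivity condition has to be imposed, and none enters the definition of the good set. The paper's $\bar\O_o$ is therefore cut out by the countably many conditions ``$\lim_{n\uparrow\infty}\g_{\L_n}[\x](A|\o)$ exists,'' $A\in\C$, each manifestly $\bigcap_{\L}(\HH\otimes \FF_{\L^c})$-measurable, after which your remaining steps go through verbatim; indeed your measurable-section argument upgrading the assumed $\HH\otimes\FF$-measurability to $\HH\otimes\FF_{\L_n^c}$-measurability is a correct and welcome justification of a step the paper leaves implicit (though the factorization of $\g_{\L_n}[\x](A|\cdot)$ through $\o_{\L_n^c}$ comes from the kernels being defined on $(\O,\FF_{\L^c})$ as part of the specification structure, not from properness alone). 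So the repair is to replace ``countable generating algebra plus countable-additivity check'' by ``countable core''; with that single substitution your proposal coincides with the paper's proof.
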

We will use the following measure-theoretic generalities for the proofs. 
Since $(E,\EE)$ is assumed to be standard Borel, also $(\O,\FF)$ is a standard Borel space. Moreover, if the configuration space is standard Borel, i.e., measure-theoretic isomorphic to a complete separable metric space with Borel sigma-algebra, then, $\FF$ can be generated by a countable core. We recall that a core $\C$ is a finite-intersection stable 
generator of the sigma algebra $\FF$ with the following property. Convergence of $(\mu_n(A))_{n\in\N}$ for all events $A\in \C$, where $(\mu_n)_{n\in\N}\subset\PP(\O)$, uniquely determines a limiting measure $\mu\in\PP(\O)$ with $\mu(A)=\lim_{n\uparrow\infty}\mu_n(A)$ for all $A\in\C$, see~\cite[Definition 4.A9]{Ge11}.

\medskip
Before we prove Proposition~\ref{prop1}, let us show how it can be used to prove Theorem~\ref{theorem1}. 
\begin{proof}[Proof of Theorem~\ref{theorem1}]
By hypothesis, for $\P$-almost all $\xi$, we have that $\Gxi$ is non-empty. By Proposition~\ref{prop1}, the measurable kernel $\pi$ exists and, in particular, by part 2 of Proposition~\ref{prop1} is a $(\GG(\g[\xi]), \TT)$-kernel. Thus, for $\P$-almost all $\xi$, using~\cite[Proposition 7.22]{Ge11}, we have the following. For every $\mu\in \Gxi$, there exists a unique $w[\xi]_{\mu}$, supported on $\Gexxi$, such that the extremal decomposition~\eqref{Decomposition} holds. In particular, the mapping $\xi\mapsto w[\xi]_{\mu[\xi]}$ exists and $w[\xi]_{\mu[\xi]}(M)=\int\mu[\xi](d\o)\one\{\pi[\xi](\cdot|\o)\in M\}$ for all $M\in {\it e}(\PP(\O))$. 
Hence, all that remains to be shown is that the mapping $\xi\mapsto w[\xi]_{\mu[\xi]}$ is $\HH - {\it e}(\PP(\PP(\O)))$ measurable.

\medskip
By the definition of the evaluation sigma algebra, it suffices to show $\HH - \mathcal B[0,1]$ measurability of 
$$\xi\mapsto (\xi,\mu[\xi])\mapsto w[\xi]_{\mu[\xi]}(M)$$ 
for all $M\in {\it e}(\PP(\O))$. For this, note that the first mapping is measurable by assumption since we supposed that $\xi\mapsto\mu[\xi]$ is $\HH -{\it e}(\PP(\O))$ measurable. 
Thus, it suffices to establish $\HH\otimes{\it e}(\PP(\O))-\mathcal B[0,1]$ measurability of 
\begin{equation}\label{Map1}
(\xi,\mu)\mapsto \int\mu(d\o)\one\{\pi[\xi](\cdot|\o)\in M\}.
\end{equation}
By the definition of ${\it e}(\PP(\O))$ we can limit ourself to check sets $M$ of the form $M(A,c)=\{\nu\in \PP(\O): \nu(A)\leq c\}$ with $A\in \FF$ and $c\in[0,1]$ in which case~\eqref{Map1} becomes
\begin{equation*}
(\xi,\mu)\mapsto \int\mu(d\o)\one\{\pi[\xi](A|\o)\le c\}.
\end{equation*}
Here, by Proposition~\ref{prop1} part 1 we have that $\{(\xi,\o): \pi[\xi](A|\o)\le c\}\in\bigcap_{\L}(\HH\otimes \FF_{\L^c})\subset (\HH\otimes \FF)$ for all $A\in\FF$ and $c\in [0,1]$.
In fact, as is a standard statement in the context of Fubini's theorem, for any $\HH\otimes \FF - \mathcal B[0,1]$ measurable bounded $g$ we have that  
$$(\xi,\mu)\mapsto \int \mu(d\o)g(\xi,\o)
$$ 
is $\HH \otimes  \text{{\it e}}(\PP(\O))-\mathcal B[0,1]$ measurable. Indeed, for any $C\in \HH\otimes \FF$ we have that 
$$(\xi,\mu)\mapsto \int \mu(d\o)\one\{(\xi,\o)\in C\}$$ 
is $\HH \otimes  \text{{\it e}}(\PP(\O)) - \mathcal B[0,1]$ measurable, since 
$$\DD=\{ C \in \HH\otimes \FF: (\xi,\mu)\mapsto \int \mu(d\o)\one\{(\xi,\o)\in C\}\text{ is }\HH \otimes  \text{{\it e}}(\PP(\O))-\mathcal B[0,1] \text{ measurable}\}
$$
contains the system of squares $Q=\{A\times A': A\in \HH, A' \in \F\}$ and is a Dynkin system.  
The statement that $Q\subset\DD$ follows, since 
$$(\xi,\mu)\mapsto \int \mu(d\o)\one\{(\xi,\o)\in A \times A'\}= \mu(A)\one\{\xi\in A'\}
$$
is measurable as a product of measurable functions. The fact that it is a Dynkin system 
is obvious since for $C_1 \supset C_2$ 
$$\int \mu(d\o)\one\{(\xi,\o)\in C_1 \ba C_2\}
= \int \mu(d\o)\one\{(\xi,\o)\in C_1\}- \int \mu(d\o)\one\{(\xi,\o)\in C_2\}
$$
where the l.h.s.~is measurable if the two terms on the r.h.s.~are, and 
a similar argument applies for countable disjoint unions. Thus, by Dynkin's theorem~\cite[Theorem 1.19]{Kl08}, the sigma-algebra generated by 
an intersection-stable system of sets equals the Dynkin system generated by it. 
We have $Q\subset\DD$ and hence $\HH\otimes\FF=\s(Q)=\DD(Q)\subset \DD$, and so $\HH\otimes\FF=\DD$ follows.
This finishes the proof.
\end{proof}
\begin{proof}[Proof of Proposition~\ref{prop1}]
Since $(E,\EE)$ is standard Borel, as outlined before the proof on Theorem~\ref{theorem1}, there exists a countable core of $\FF$ which we denote $\C$, for details see~\cite[Theorem 4.A11]{Ge11}. Only at this moment we will make use of this notion, namely in the definition of the following set. 
Extending the construction in~\cite[Proposition 7.25]{Ge11} we define
\begin{equation*}
\bar \O_o=\{ (\x,\o)\in H \times \O: \lim_{n\uparrow \infty} \g_{\L_n}[\xi](A | \o) {\text{ exists for all }}A\in \C\},
\end{equation*}
the set of disorder parameters and configurations such that the specification has an infinite-volume  limit on the core. Here $(\L_n)_{n\in\N}$, denotes a cofinal sequence of finite subsets of $S$. While for some $(\xi,\o)$ the convergence property might depend on the choice of the sequence, this will not matter in the construction, as we will see. In particular, $\bar \O_o=\bigcap_{A\in\C}\bar\O_o(A)$ where $\bar\O_o(A)=\{ (\x,\o)\in H \times \O : 
\lim_{n\uparrow \infty} \g_{\L_n}[\xi](A | \o) {\text{ exists}}\}$ since $\C$ is assumed to be countable. 
Further, $\bar\O_o(A)\in \bigcap_{\L}(\HH\otimes \FF_{\L^c})$ for all $A\in \C$
and thus $\bar \O_o  \in \bigcap_{\L}(\HH\otimes \FF_{\L^c})$. 
Define $ \O_o(\xi)=\{\o\in\O : (\x,\o)\in \bar \O_o\}$ and note that $ \O_o(\xi)\in \TT$ lies 
in the tail sigma algebra corresponding to $\FF$. 

\medskip
Fixing a disorder parameter $\xi$, we can repeat the arguments from~\cite[Proposition 7.25]{Ge11} to define a kernel $\pi[\xi]$ and prove part 2 of the Proposition. For this also fix $\o\in \O_o(\xi)$. 
Then, by the definition of the core $\C$, there is a unique probability measure $\pi[\xi](\cdot | \o)\in \PP(\O)$ such that $\pi[\xi](A | \o)=\lim_{n\uparrow\infty} \g_{\L_n}[\xi](A | \o)$ for all $A \in \C$. 
We extend $\pi[\xi](\cdot |\o)=\nu_o$ for an arbitrary measure $\nu_o\in \PP(\O)$ whenever $(\xi,\o)$ are not in the set $\bar \O_o$. Since, for $\P$-almost all $\xi$ we have that $\Gxi\neq\emptyset$, repeating the exact same arguments as in~\cite[Proposition 7.25]{Ge11}, we see that for $\P$-almost all $\xi$, $\pi[\xi]$ is indeed a $(\GG(\g[\xi]), \TT)$-kernel.

\medskip
The preceding arguments established existence of the map $\pi$. What remains to be shown is part 1. We must show that $(\xi,\o)\mapsto \pi[\xi](A |\o)$ is $\bigcap_{\L}(\HH\otimes \FF_{\L^c}) - \mathcal B[0,1]$ measurable 
at any $A \in \FF$. For this define 
$$\DD=\{ A \in \FF: (\xi,\o)\mapsto \pi[\xi](A |\o) \text{ is } \bigcap_{\L}(\HH\otimes \FF_{\L^c}) - \mathcal B[0,1]\text{ measurable}\},$$
and recall that for any $A\in\C$,
\begin{equation}
\pi[\xi](A | \o)=\one\{(\xi,\o)\in \bar \O_o\}\lim_{n\uparrow\infty}\g_{\L_n}[\xi](A | \o)+\one\{(\xi,\o)\not\in \bar \O_o\}\nu_o(A).
\end{equation}
Thus, $\C$ is contained in $\DD$, since for all $A\in \C$ the limit $\lim_{n\uparrow\infty} \g_{\L_n}[\xi](A | \o)$ as well as the indicators $\one\{\bar \O_o\}$ and $\nu_o(A)\one\{\bar \O_o\}$ are $\bigcap_{\L}(\HH\otimes \FF_{\L^c}) - \mathcal B[0,1]$ measurable function
and hence $(\xi,\o)\mapsto \pi[\xi](A | \o)$ is a $\bigcap_{\L}(\HH\otimes \FF_{\L^c}) - \mathcal B[0,1]$ measurable function. 

Note also that $\DD$ is a Dynkin system. For, let $A\subset B$ be events in $\DD$, then 
\begin{equation}\begin{split}
\pi&[\xi](B\backslash A | \o)=\one\{(\xi,\o)\in \bar \O_o\}\lim_{n\uparrow\infty}\g_{\L_n}[\xi](B\backslash A  | \o)+\one\{(\xi,\o)\not\in \bar \O_o\}\nu_o(B\backslash A )\cr
&=\one\{(\xi,\o)\in \bar \O_o\}\big(\lim_{n\uparrow\infty}\g_{\L_n}[\xi](B| \o) - \lim_{n\uparrow\infty}\g_{\L_n}[\xi](A| \o)\big) 
+\one\{(\xi,\o)\not\in \bar \O_o\}\big(\nu_o(B)-\nu_o(A)\big) \cr
&=\pi[\xi](B| \o)- \pi[\xi](A| \o).
\end{split}
\end{equation}
Hence measurability of the r.h.s.~carries over to that of the l.h.s.. 
In the same way it is seen that $\pi[\xi](\bigcup_n A_n | \o)=\sum_n \pi[\xi](A_n| \o)$ 
is measurable for disjoint $A_n$ when each of the $ \pi[\xi](A_n| \o)$  is. 
Thus, by Dynkin's theorem we have that $\C\subset\DD$ and hence $\FF=\s(\C)=\DD(\C)\subset \DD$, and so $\FF=\DD$ follows, this completes the proof.  
\end{proof}

\begin{proof}[Proof of Theorem~\ref{theorem2}]
We need to go back to the definition of the metastate as an image of the $\xi$-measurable kernel under 
the random Gibbs measure, and check how these objects behave under translation. Recall that
\begin{equation*}
w[\xi]_{\mu[\xi]}(M)=\int\mu[\xi](d\o)\one\{\pi[\xi](\cdot|\o)\in M\},
\end{equation*}
and thus for $\P$-almost all $\xi$, by a change of variables,
\begin{equation*}
\begin{split}
w[\theta_j\xi]_{\mu[\theta_j\xi]}(\theta_jM)&=\int\mu[\theta_j\xi](d\o)\one\{\pi[\theta_j\xi](\cdot|\o)\in \theta_jM\} \\
&=\int\mu[\xi](d(\theta_{-j}\o))\one\{\pi[\theta_j\xi](\cdot|\o)\in \theta_jM\} \\
&=\int\mu[\xi](d\o)\one\{\pi[\theta_j\xi](\cdot|\theta_j\o)\in \theta_jM\},
\end{split}
\end{equation*}
where the second step follows from the translation covariance of $\mu[\xi]$. Now, in order to conclude equation~\eqref{cov1}, it suffices to show that $\pi[\theta_j\xi](\cdot|\theta_j\o)\in \theta_jM$ if and only if $\pi[\xi](\cdot|\o)\in M$. For this, first note that the statement that there exists $\nu\in\theta_jM$ such that for all $A\in\FF$ we have 
$$\pi[\theta_j\xi](\theta_jA|\theta_j\o)=\nu(\theta_jA),$$
since $\nu(\theta_jA)=\theta_{-j}\nu(A)$, is equivalent to the statement that there exists $\nu\in M$ such that for all $A\in\FF$ we have 
$$\pi[\theta_j\xi](\theta_jA|\theta_j\o)=\nu(A),$$
since $\theta_{-j}\nu\in M$. 
Thus, it suffices to show that for all $j\in S$, $A\in \FF$ and $\o\in \O$ we have that 
\begin{equation*}
\pi[\theta_j\xi](\theta_jA|\theta_j\o)=\pi[\xi](A|\o)\qquad\text{for }\P\text{-almost all }\xi.
\end{equation*}
We modify slightly previous arguments to reflect the translation-invariant setting. First, note that for a product spin space of the form $\O=E^S$ with $S=\Z^d$ and with a standard Borel set $E$, 
the countable core $\C$ can be chosen in a translation-invariant way, meaning that for each subset of $\C$ 
also its translate belongs to $\C$, see~\cite[Corollary 4.A.13]{Ge11}, and the discussion preceding it. We quickly recall the arguments: $E$ is measure-theoretic isomorphic either to $\{1, \dots, |E| \}$ with the product set-sigma algebra, if $|E|<\infty$; 
$\N$ with the product set-sigma algebra, if $|E|$ is countably infinite; or $\{0,1\}^{\N}$ with sigma-algebra generated by the cylinder sets, if $E$ is uncountably infinite. 
For instance, in the uncountably infinite case $\O$ is isomorphic to $\{0,1\}^{\N\times \Z^d}$, and the countable core can be chosen as the set of finite cylinders of this set. The set of cylinders is translation invariant, in particular under translations in $\Z^d$. 
Second, we use a slight modification of the kernel 
\begin{equation*}
\pi[\xi](A | \o)=\one\{(\o,\xi)\in \bar \O_o\}\lim_{n\uparrow\infty}\g_{\L_n}[\xi](A | \o)+\one\{(\o,\xi)\not\in \bar \O_o\}\nu_o(A),
\end{equation*}
defined now via the set 
\begin{equation*}
\label{tag}
\bar \O_0=\{ (\x,\o)\in H \times \O : 
\lim_{n\uparrow \infty} \g_{\theta_i\L_n}[\xi](A | \o) {\text{ exists for all }}A\in \C\text{ and all }i\in\Z^d,\text{ and is independent of }i
\},
\end{equation*}
where convergence is now guaranteed in a translation invariant way.
Using this, we have that  
\begin{equation*}\label{gu}
\begin{split}
\pi[\theta_j\xi](\theta_jA |\theta_j\o)&=\one\{(\theta_j\o,\theta_j\xi)\in \bar \O_o\}\lim_{n\uparrow\infty}\g_{\theta_j\L_n}[\theta_j\xi](\theta_jA |\theta_j \o)+\one\{(\theta_j\o,\theta_j\xi)\not\in \bar \O_o\}\nu_o(\theta_jA)\cr
&=\one\{(\o,\xi)\in \bar \O_o\}\lim_{n\uparrow\infty}\g_{\L_n}[\xi](A |\o)+\one\{(\o,\xi)\not\in \bar \O_o\}\nu_o(A)
=\pi[\xi](A |\o),
\end{split}
\end{equation*}
when we choose $\nu_o$ in a translation-invariant way. 
This concludes the proof.
\end{proof}

\bibliography{Jahnel}

\begin{thebibliography}{ADNS10}

\bibitem[ADNS10]{ArDaNeSt10}
Louis-Pierre Arguin, Michael Damron, Charles Newman, and Daniel Stein.
\newblock Uniqueness of ground states for short-range spin glasses in the
  half-plane.
\newblock {\em Comm. Math. Phys.}, 300(3):641--657, 2010.

\bibitem[ANS18]{ArNeSt18}
Louis-Pierre Arguin, Charles Newman, and Daniel Stein.
\newblock A relation between disorder chaos and incongruent states in spin
  glasses on {$ \mathbb Z^d$}.
\newblock {\em arXiv preprint arXiv:1803.02308}, 2018.

\bibitem[AP18]{AiPe18}
Michael Aizenman and Ron Peled.
\newblock A power-law upper bound on the correlations in the 2d random field
  {I}sing model.
\newblock {\em arXiv preprint arXiv:1808.08351}, 2018.

\bibitem[ASZ18]{BeSuZe18}
G{\'e}rard~Ben Arous, Eliran Subag, and Ofer Zeitouni.
\newblock Geometry and temperature chaos in mixed spherical spin glasses at low
  temperature-the perturbative regime.
\newblock {\em arXiv preprint arXiv:1804.10573}, 2018.

\bibitem[AW90]{AiWe90}
Michael Aizenman and Jan Wehr.
\newblock Rounding effects of quenched randomness on first-order phase
  transitions.
\newblock {\em Comm. Math. Phys.}, 130(3):489--528, 1990.

\bibitem[BK88]{BrKu88}
Jean Bricmont and Antti Kupiainen.
\newblock Phase transition in the 3d random field {I}sing model.
\newblock {\em Comm. Math. Phys.}, 116(4):539--572, 1988.

\bibitem[BK07]{BiKo07}
Marek Biskup and Roman Koteck\'{y}.
\newblock Phase coexistence of gradient {G}ibbs states.
\newblock {\em Probab. Theory Related Fields}, 139(1-2):1--39, 2007.

\bibitem[Bov06]{Bo06}
Anton Bovier.
\newblock {\em Statistical mechanics of disordered systems: a mathematical
  perspective}, volume~18.
\newblock Cambridge University Press, 2006.

\bibitem[CH10]{ChHo10}
Jean-Ren{\'e} Chazottes and Michael Hochman.
\newblock On the zero-temperature limit of {G}ibbs states.
\newblock {\em Comm. Math. Phys.}, 297(1):265--281, 2010.

\bibitem[CK12]{CoKu12}
Codina Cotar and Christof K\"{u}lske.
\newblock Existence of random gradient states.
\newblock {\em Ann. Appl. Probab.}, 22(4):1650--1692, 2012.

\bibitem[CK15]{CoKu15}
Codina Cotar and Christof K\"{u}lske.
\newblock Uniqueness of gradient {G}ibbs measures with disorder.
\newblock {\em Probab. Theory Related Fields}, 162(3-4):587--635, 2015.

\bibitem[CRL15]{CoRi15}
Daniel Coronel and Juan Rivera-Letelier.
\newblock Sensitive dependence of {G}ibbs measures at low temperatures.
\newblock {\em J. Stat. Phys.}, 160(6):1658--1683, 2015.

\bibitem[F{\"o}l75]{Fo75}
Hans F{\"o}llmer.
\newblock Phase transition and {M}artin boundary.
\newblock In {\em S{\'e}minaire de Probabilit{\'e}s IX Universit{\'e} de
  Strasbourg}, volume 465, pages 305--317, Berlin, Heidelberg, 1975.

\bibitem[FS97]{FuSp97}
Tadahisa Funaki and Herbert Spohn.
\newblock Motion by mean curvature from the {G}inzburg-{L}andau {$\nabla \phi$}
  interface model.
\newblock {\em Comm. Math. Phys.}, 185(1):1--36, 1997.

\bibitem[Geo11]{Ge11}
Hans-Otto Georgii.
\newblock {\em Gibbs measures and phase transitions}, volume~9.
\newblock Walter de Gruyter \& Co., Berlin, second edition, 2011.

\bibitem[IK10]{IaKu10}
Giulio Iacobelli and Christof K{\"u}lske.
\newblock Metastates in finite-type mean-field models: visibility,
  invisibility, and random restoration of symmetry.
\newblock {\em J. Stat. Phys.}, 140(1):27--55, 2010.

\bibitem[Kle13]{Kl08}
Achim Klenke.
\newblock {\em Probability theory: a comprehensive course}.
\newblock Universitext. Springer London, 2013.

\bibitem[K{\"u}l97]{Ku97}
Christof K{\"u}lske.
\newblock Metastates in disordered mean-field models: random field and
  {H}opfield models.
\newblock {\em J. Stat. Phys.}, 88(5-6):1257--1293, 1997.

\bibitem[NS97]{NeSt97}
Charles Newman and Daniel Stein.
\newblock Metastate approach to thermodynamic chaos.
\newblock {\em Phys. Rev. E (3)}, 55(5, part A):5194--5211, 1997.

\bibitem[NS98]{NeSt98}
Charles Newman and Daniel Stein.
\newblock Thermodynamic chaos and the structure of short-range spin glasses.
\newblock In {\em Mathematical aspects of spin glasses and neural networks},
  volume~41 of {\em Progr. Probab.}, pages 243--287. Birkh\H{a}user Boston,
  Boston, MA, 1998.

\bibitem[NS13]{NeSt13}
Charles Newman and Daniel Stein.
\newblock {\em Spin glasses and complexity}.
\newblock Princeton University Press, 2013.

\bibitem[vEK08]{EnKu08}
Aernout van Enter and Christof K\"{u}lske.
\newblock Nonexistence of random gradient {G}ibbs measures in continuous
  interface models in {$d=2$}.
\newblock {\em Ann. Appl. Probab.}, 18(1):109--119, 2008.

\bibitem[vER07]{EnRu07}
Aernout van Enter and Wioletta Ruszel.
\newblock Chaotic temperature dependence at zero temperature.
\newblock {\em J. Stat. Phys.}, 127(3):567--573, 2007.

\bibitem[WMK15]{WaMaKa15}
Wenlong Wang, Jonathan Machta, and Helmut Katzgraber.
\newblock Chaos in spin glasses revealed through thermal boundary conditions.
\newblock {\em Physical Review B}, 92(9):094410, 2015.

\end{thebibliography}
\bibliographystyle{alpha}

\end{document}